\documentclass{amsproc}
%%%%%%%%%%%%%%%%%%%%%%%%%%%%%%%%%%%%%%%%%%%%%%%%%%%%%%%%%%%%%%%%%%%%%%%%%%%%%%%%%%%%%%%%%%%%%%%%%%%%%%%%%%%%%%%%%%%%%%%%%%%%%%%%%%%%%%%%%%%%%%%%%%%%%%%%%%%%%%%%%%%%%%%%%%%%%%%%%%%%%%%%%%%%%%%%%%%%%%%%%%%%%%%%%%%%%%%%%%%%%%%%%%%%%%%%%%%%%%%%%%%%%%%%%%%%
\usepackage{amsfonts}
\setcounter{MaxMatrixCols}{10}
%TCIDATA{OutputFilter=LATEX.DLL}
%TCIDATA{Version=5.50.0.2890}
%TCIDATA{<META NAME="SaveForMode" CONTENT="1">}
%TCIDATA{BibliographyScheme=Manual}
%TCIDATA{Created=Friday, October 27, 2017 04:47:52}
%TCIDATA{LastRevised=Saturday, July 07, 2018 09:09:08}
%TCIDATA{<META NAME="GraphicsSave" CONTENT="32">}
%TCIDATA{<META NAME="DocumentShell" CONTENT="Articles\SW\AMS Proceedings Article">}
%TCIDATA{Language=American English}
%TCIDATA{CSTFile=amsprtci.cst}
%TCIDATA{ComputeDefs=
%$g=f$
%}

\theoremstyle{plain}

\newtheorem{corollary}{Corollary}

\newtheorem{definition}{Definition}
\newtheorem{example}{Example}

\newtheorem{lemma}{Lemma}

\newtheorem{problem}{Problem}
\newtheorem{proposition}{Proposition}
\newtheorem{remark}{Remark}

\newtheorem{theorem}{Theorem}
\numberwithin{equation}{section}

\begin{document}
\title[Invariance identity and means]{A new invariance identity and means}
\author{Jimmy Devillet}
\curraddr{Mathematics Research Unit, University of Luxembourg, Maison du
Nombre, 6, avenue de la Fonte, L-4364 Esch-sur-Alzette, Luxembourg}
\email{jimmy.devillet@uni.lu}
\author{Janusz Matkowski}
\curraddr{Faculty of Mathematics Computer Science and Econometrics,
Univerity of Zielona G\'{o}ra, Szafrana 4A, PL 65-516 Zielona G\'{o}ra,
Poland}
\email{J.Matkowski@wmie.uz.zgora.pl}

\begin{abstract}
The invariance identity involving three operations $D_{f,g}:X\times
X\rightarrow X$ of the form%
\begin{equation*}
D_{f,g}\left( x,y\right) =\left( f\circ g\right) ^{-1}\left( f\left(
x\right) \oplus g\left( y\right) \right) \text{,}
\end{equation*}%
is proposed. The connections of these operations with means is investigated.
The question when the invariance equality admits three means leads to a
composite functional equation. Problem to determine its continuous solutions
is posed.
\end{abstract}

\maketitle

\section{Introduction}

\footnotetext{\textit{2010 Mathematics Subject Classification. }Primary:
26A18, 26E60, 39B12.
\par
\textit{Keywords and phrases:} invariant functions, mean, invariant mean,
reflexivity, iteration, functional equation
\par
{}}

The invariance of a function with respect to a mapping is frequently
advantageous and helpful. For instance, the existence of a fixpoint of a
mapping can be interpreted as an invariance of a constant function with
respect to this map; the knowledge of the invariant mean with respect to a
mean-type mapping allows to find the (nonconstant) limit of the sequence of
the iterates of this mapping and to solve effectively some functional
equations.

In the present paper, given arbitrary sets $X,Y,$ a binary operation $\oplus
:X\times X\rightarrow X$ and bijective functions $f:X\rightarrow X,$ $%
g:Y\rightarrow X$, we consider the mapping $D_{f,g}:X\times Y\rightarrow Y$
of the form%
\begin{equation*}
D_{f,g}\left( x,y\right) =\left( f\circ g\right) ^{-1}\left( f\left(
x\right) \oplus g\left( y\right) \right) \text{,}
\end{equation*}%
where "$\circ $" stands for the composition (Section 2). In the case when
the operation $\oplus $ is bisymmetric and $Y=X$ (that is, when $(X,\oplus )$
is a \textit{medial groupoid} (\cite{Jez78,Jez83,Jez83(1),Jez94,Kep81})), we
show that for arbitrary bijective functions $f,g,h:X\rightarrow X$ such that $D_{f,g}$ and $D_{g,h}$ are reflexive, the
function $D_{f\circ g,g\circ h}$ is invariant with respect to the mapping $%
\left( D_{f,g},D_{g,h}\right) :X^{2}\rightarrow X^{2},$ i.e. that
\begin{equation*}
D_{f\circ g,g\circ h}\circ \left( D_{f,g},D_{g,h}\right) =D_{f\circ g,g\circ
h}\text{.}
\end{equation*}

The similarity of $D_{f,g}$ to the generalized weighted quasiarithmetic
means introduced in 2003 in \cite[Remark 1]{Matko03} (see also \cite{BaPa09,Matko08,Matko10}) motivates our considerations in Section 3. Assuming
that $X$ is an open real interval $I\subset \mathbb{R},$ and the operation "$%
\oplus "$ is the addition, we examine conditions under which $D_{f,g}$ is a
bivariable mean. The reflexivity property of every mean leads to the
iterative functional equation
\begin{equation*}
f\left( g\left( x\right) \right) =f\left( x\right) +g\left( x\right) \text{,
\ \ \ }x\in I,
\end{equation*}%
where $I=\left( 0,\infty \right) $ (or $I=\left[ 0,\infty \right) $) and the
functions $f$ and $g$ are unknown. Under some natural conditions, Theorem 2,
the main result of this section, says in particular that $D_{f,g}$ is a
bivariable mean in $\left( 0,\infty \right) $, if and only if,
\begin{equation*}
f=\sum_{k=0}^{\infty }g^{-k},
\end{equation*}%
where the series $\sum_{k=0}^{\infty }g^{-k}$ of iterates of $g^{-1}\,\ $%
converges uniformly on compact subsets of $\left( 0,\infty \right) $;
moreover $D_{f,g}=\mathcal{D}_{g}$, where
\begin{equation*}
\mathcal{D}_{g}\left( x,y\right) =\left( \sum_{k=0}^{\infty }g^{-k+1}\right)
^{-1}\left( \sum_{k=0}^{\infty }g^{-k}\left( x\right) +g\left( y\right)
\right) ,\ \ \ \ x,y>0,
\end{equation*}%
which justify the names: \textit{iterative type mean} for $\mathcal{D}_{g}$,
and \textit{iterative generator} of this mean for $g$.

If $D_{f,g}=\mathcal{D}_{g}$ and $D_{g,h}=\mathcal{D}_{h},$ then, in view of
Theorem 1, the function $D_{f\circ g,g\circ h}$\ is invariant with respect
to the mean type mapping $\left( \mathcal{D}_{g},\mathcal{D}_{h}\right) $.

On the other hand $\mathcal{D}_{g}$ (and $\mathcal{D}_{h}$) is a very
special case of \textit{generalized weighted quasiarithmetic mean} $%
M_{\varphi ,\psi }:I^{2}\rightarrow I$, of the form%
\begin{equation*}
M_{\varphi ,\psi }\left( x,y\right) =\left( \varphi +\psi \right)
^{-1}\left( \varphi \left( x\right) +\psi \left( y\right) \right) \text{, \
\ \ \ }x,y\in I\text{,}
\end{equation*}%
where $\varphi ,\psi :I\rightarrow \mathbb{R}$ are continuous, of same type
monotonicity, and such that $\varphi +\psi $ is strictly monotonic in the
interval $I$ (\cite{Matko10}). It is known that if $M_{\varphi ,\psi }$ and $%
M_{\psi ,\gamma }$ are two generalized weighted quasiarithmetic means, then $%
M_{\varphi +\psi ,\psi +\gamma },$ the mean of the same type, is a unique
mean that is invariant with respect to the mean-type mapping $\left(
M_{\varphi ,\psi },M_{\psi ,\gamma }\right) :I^{2}\rightarrow I^{2}$ (\cite%
{Matko14}). \ This leads to natural and equivalent questions: \ is $%
D_{f\circ g,g\circ h}$ a mean; is the operation $D_{f\circ g,g\circ h}$ a
generalized weighted quasiarithmetic mean; or simply can the three
operations $D_{f,g}$, $D_{g,h}$ and $D_{f\circ g,g\circ h}$ be means
simultaneously? In Section 4 we prove that it can happen iff
\begin{equation*}
g=\sum_{i=0}^{\infty }h^{-i}\text{, \ \ \ \ \ \ \ \ \ }f=\sum_{j=0}^{\infty
}\left( \sum_{i=0}^{\infty }h^{-i}\right) ^{-j}\text{, }
\end{equation*}%
and $h$ satisfies "strongly" composite functional equation
\begin{equation*}
\text{\ }\left( \sum_{j=0}^{\infty }\left( \sum_{i=0}^{\infty }h^{-i}\right)
^{-j+1}\right) =\sum_{k=0}^{\infty }\left( \sum_{i=0}^{\infty
}h^{-i+1}\right) ^{-k},
\end{equation*}%
where $i,j,k$ stand for the indices of iterates of the suitable functions
(Theorem 3). We propose as an open problem to find the continuous solutions $%
h:\left( 0,\infty \right) \rightarrow \left( 0,\infty \right) .$ In
illustrative Example 3 (with $h\left( x\right) =w^{-1}x$ and $w\in \left(
0,1\right) )$ we get $D_{f,g}\left( x,y\right) =\left( 1-w\right)
x+wy,$ $D_{g,h}\left( x,y\right) =wx+\left( 1-w\right) ,$ and $%
D_{f\circ g,g\circ h}=2w\left( 1-w\right) A$, where $A$ is the arithmetic
mean, which shows $D_{f\circ g,g\circ h}$ need not be a mean.

%\bigskip
%
%In the last section (OR\ IN\ THE OTHER\ PROPER\ PLACES)\ we indicate:
%
%\ \ \ \ \ \ \ \ \ \ \ \ \ \ \ \ \ \ \ \ \ \  possible generalization...
%
%some relations with investigations by various authors in the theory of
%functional equations (see, e.g., \cite{Acz1} \cite{Acz2} \cite{Acz3}, \cite%
%{Cauchy} \cite{FoMa97} \cite{GMRE09}). \ \
%
%when $\left( D_{f,g},I\right) $ is an Acz\'{e}lian semigroup ,(Indeed, by
%introducing the concept of quasiarithmetic and quasilinear means which are
%generalizations of arithmetic and weighted arithmetic means, Acz\'{e}l \cite%
%{Acz2} is one of the founder of these investigations....
%
%\bigskip other comments

\section{The invariance identity}

We begin this section with the following

\begin{remark}
Let $X$ and $Y$ be nonempty sets and $\oplus :X^{2}\rightarrow X$ a binary
operation on $X$. If $f:X\rightarrow X$, $g:Y\rightarrow X$ are bijective,
i.e., one-to-one and onto, then $D_{f,g}:X\times Y\rightarrow Y$ given by%
\begin{equation*}
D_{f,g}\left( x,y\right) :=\left( f\circ g\right) ^{-1}\left( f\left(
x\right) \oplus g\left( y\right) \right) \text{, \ \ \ \ }x\in X\text{, }%
y\in Y\text{,}
\end{equation*}%
is a correctly defined mapping of the product $X\times Y$ into $Y.$
\end{remark}

Indeed, for arbitrary $\left( x,y\right) \in X\times Y,$ we have $f\left(
x\right) \oplus g\left( y\right) \in X.$ Hence, as $f:X\rightarrow X$ is
one-to-one and onto, we have
\begin{equation*}
f^{-1}\left( f\left( x\right) \oplus g\left( y\right) \right) \in X\text{,}
\end{equation*}%
whence, as $g:Y\rightarrow X$ is one-to-one and onto, we have%
\begin{equation*}
g^{-1}\left( f^{-1}\left( f\left( x\right) \oplus g\left( y\right) \right)
\right) \in Y.
\end{equation*}%
Since $\left( f\circ g\right) ^{-1}=g^{-1}\circ f^{-1},$ it follows that
\begin{equation*}
D_{f,g}\left( x,y\right) =\left( f\circ g\right) ^{-1}\left( f\left(
x\right) \oplus g\left( y\right) \right) =g^{-1}\left( f^{-1}\left( f\left(
x\right) \oplus g\left( y\right) \right) \right) \in Y,
\end{equation*}%
which shows that $D_{f,g}$ is a correctly defined mapping of $X\times Y$ into $%
Y$.

\begin{remark}
Under the conditions of Remark 1, the operation $D_{f,g}$ is symmetric,
i.e., $D_{f,g}\left( x,y\right) =D_{f,g}\left( y,x\right) $ for all $x\in
X\, $, $y\in Y$, iff $Y=X$ and the equality%
\begin{equation*}
\left(f\circ g^{-1}\right)\left( x\right) \oplus y=\left(f\circ g^{-1}\right)\left( y\right) \oplus x%
\text{, \ \ \ \ }x,y\in X,
\end{equation*}%
is satisfied.
\end{remark}

In the sequel, assuming that $Y=X$ in Remark 1 we introduce the following

\begin{definition}
Let $X$ be a nonempty set and $\oplus :X^{2}\rightarrow X$ be a binary
operation. For arbitrary bijective functions $f,g:X\rightarrow X,$ we define
$D_{f,g}:X\times X\rightarrow X$ by the formula%
\begin{equation*}
D_{f,g}\left( x,y\right) :=\left( f\circ g\right) ^{-1}\left( f\left(
x\right) \oplus g\left( y\right) \right) \text{, \ \ \ \ }x,y\in X\text{.}
\end{equation*}
\end{definition}

The main result of this section reads as follows

\begin{theorem}
$\,$ Let $X$ be a nonempty set and let $\oplus :X^{2}\rightarrow X$ be a
bisymmetric binary operation, that is%
\begin{equation*}
\left( u\oplus v\right) \oplus \left( w\oplus z\right) =\left( u\oplus
w\right) \oplus \left( v\oplus z\right) \text{, \ \ \ \ \ \ }u,v,w,z\in X.
\end{equation*}%
If $f,g,h:X\rightarrow X$ are bijections such that
\begin{equation*}
f\left( x\right) \oplus g\left( x\right) =\left( f\circ g\right) \left(
x\right) \text{, \ \ \ \ \ }g\left( x\right) \oplus h\left( x\right) =\left(
g\circ h\right) \left( x\right) \text{, \ \ \ \ \ \ \ }x\in X\text{,}
\end{equation*}%
then
\begin{equation}
D_{f\circ g,g\circ h}\circ \left( D_{f,g},D_{g,h}\right) =D_{f\circ g,g\circ
h},  \tag{1}
\end{equation}%
that is $D_{f\circ g,g\circ h}$ is invariant with respect the mapping $%
\left( D_{f,g},D_{g,h}\right) :X^{2}\rightarrow X^{2}.$
\end{theorem}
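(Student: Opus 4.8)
The plan is to evaluate the left-hand side of (1) at an arbitrary point $(x,y)\in X^{2}$ and simplify it to $D_{f\circ g,g\circ h}(x,y)$ by a direct computation. Writing out the definition, the value of $D_{f\circ g,g\circ h}\circ\left(D_{f,g},D_{g,h}\right)$ at $(x,y)$ is
\begin{equation*}
\left((f\circ g)\circ(g\circ h)\right)^{-1}\left((f\circ g)\left(D_{f,g}(x,y)\right)\oplus(g\circ h)\left(D_{g,h}(x,y)\right)\right).
\end{equation*}
The first simplification I would make is to cancel the outer inverses against their arguments: since $D_{f,g}(x,y)=(f\circ g)^{-1}\left(f(x)\oplus g(y)\right)$, applying $f\circ g$ gives $(f\circ g)\left(D_{f,g}(x,y)\right)=f(x)\oplus g(y)$, and likewise $(g\circ h)\left(D_{g,h}(x,y)\right)=g(x)\oplus h(y)$. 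Hence the inner argument reduces to $\left(f(x)\oplus g(y)\right)\oplus\left(g(x)\oplus h(y)\right)$.

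The key step is then to apply bisymmetry to this expression with the assignment $u=f(x)$, $v=g(y)$, $w=g(x)$, $z=h(y)$, which transposes the two middle entries and yields
\begin{equation*}
\left(f(x)\oplus g(y)\right)\oplus\left(g(x)\oplus h(y)\right)=\left(f(x)\oplus g(x)\right)\oplus\left(g(y)\oplus h(y)\right).
\end{equation*}
Now the two hypotheses of the theorem apply directly: $f(x)\oplus g(x)=(f\circ g)(x)$ and $g(y)\oplus h(y)=(g\circ h)(y)$, so the right-hand side collapses to $(f\circ g)(x)\oplus(g\circ h)(y)$.

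Substituting back, the whole expression becomes
\begin{equation*}
\left((f\circ g)\circ(g\circ h)\right)^{-1}\left((f\circ g)(x)\oplus(g\circ h)(y)\right),
\end{equation*}
which is precisely $D_{f\circ g,g\circ h}(x,y)$ by definition. As $(x,y)$ was arbitrary, this establishes (1).

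The computation is short, so there is no serious obstacle; the one point requiring care is the bookkeeping in the bisymmetry step — recognizing that bisymmetry is exactly the axiom that regroups $\left(f(x)\oplus g(y)\right)\oplus\left(g(x)\oplus h(y)\right)$ into the pattern $\left(f(x)\oplus g(x)\right)\oplus\left(g(y)\oplus h(y)\right)$, on which the two reflexivity identities $f\oplus g=f\circ g$ and $g\oplus h=g\circ h$ act. Everything else is the routine cancellation of the bijections $f\circ g$ and $g\circ h$ against their inverses.
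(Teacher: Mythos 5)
Your proof is correct and is essentially identical to the paper's: the same pointwise evaluation, the same cancellation of $(f\circ g)$ and $(g\circ h)$ against their inverses, and the same application of bisymmetry with $u=f(x)$, $v=g(y)$, $w=g(x)$, $z=h(y)$ followed by the two hypotheses. Nothing to add.
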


\begin{proof}
By the definition of $D_{f,g}$, $D_{g,h}$, $D_{f\circ g,g\circ h}$, and the bisymmetry of $\oplus $, we have, for
all $x,y\in X,$%
\begin{eqnarray*}
&&D_{f\circ g,g\circ h}\circ \left( D_{f,g},D_{g,h}\right) \left( x,y\right)
\\
&=&\left( \left( f\circ g\right) \circ \left( g\circ h\right) \right)
^{-1}\left( \left( f\circ g\right) \left( D_{f,g}\left( x,y\right) \right)
\oplus \left( g\circ h\right) \left( D_{g,h}\left( x,y\right) \right)
\right)  \\
&=&\left( \left( f\circ g\right) \circ \left( g\circ h\right) \right)
^{-1}\left( \left( f\circ g\right) \left( \left( f\circ g\right) ^{-1}\left(
f\left( x\right) \oplus g\left( y\right) \right) \right) \oplus \left(
g\circ h\right) \left( \left( g\circ h\right) ^{-1}\left( g\left( x\right)
\oplus h\left( y\right) \right) \right) \right)  \\
&=&\left( \left( f\circ g\right) \circ \left( g\circ h\right) \right)
^{-1}\left( \left( f\left( x\right) \oplus g\left( y\right) \right) \oplus
\left( g\left( x\right) \oplus h\left( y\right) \right) \right)  \\
&=&\left( \left( f\circ g\right) \circ \left( g\circ h\right) \right)
^{-1}\left( \left( f\left( x\right) \oplus g\left( x\right) \right) \oplus
\left( g\left( y\right) \oplus h\left( y\right) \right) \right)  \\
&=&\left( \left( f\circ g\right) \circ \left( g\circ h\right) \right)
^{-1}\left( \left( f\circ g\right) \left( x\right) \oplus \left( g\circ
h\right) \left( y\right) \right)  \\
&=&D_{f\circ g,g\circ h}\left( x,y\right) ,
\end{eqnarray*}%
which proves the result.
\end{proof}

\section{Operation $D_{f,g}$ and means}

Let $I\subset \mathbb{R}$ be an interval that is closed with respect to the
addition and let $f,g:I\rightarrow I$ be bijective functions. Then, in view of
Definition 1, the two-variable function $D_{f,g}:I^{2}\rightarrow I$ given by%
\begin{equation}
D_{f,g}\left( x,y\right) :=\left( f\circ g\right) ^{-1}\left( f\left(
x\right) +g\left( y\right) \right) \text{, \ \ \ \ }x,y\in I\text{,}  \tag{2}
\end{equation}%
is correctly defined.

In this section we examine when $D_{f,g}$ is a mean. Recall that a function $%
M\colon I^{2}\rightarrow \mathbb{R}$ is said to be a \emph{mean} if it is
internal, i.e.
\begin{equation*}
\min \left( x,y\right) \leq M\left( x,y\right) \leq \max \left( x,y\right)
\text{, \ \ \ \ \ }x,y\in I,
\end{equation*}%
and \emph{strict mean} if it is a mean and these inequalities are sharp for
all $x\neq y$.

If $M$ is a mean in $I,$ then $M\left( J^{2}\right) \subset J$ for every
interval $J\subset I$, in particular $M\colon I^{2}\rightarrow I$; moreover $%
M$ is \emph{reflexive, }i.e.%
\begin{equation*}
M\left( x,x\right) =x\text{, \ \ \ \ \ }x\in I\text{.}
\end{equation*}

\begin{remark}
If $D_{f,g}:I^{2}\rightarrow I$ defined by (2) is symmetric then $g=f+c$ for
some real constant $c$; if moreover $D_{f,g}$ is a mean, then%
\begin{equation*}
D_{f,g}\left( x,y\right) =A\left( x,y\right) \text{, \ \ \ \ }x,y\in I\,%
\text{, \ \ \ \ \ \ }
\end{equation*}
\end{remark}

where $A\left( x,y\right) :=\frac{x+y}{2}$ is the arithmetic mean.

Indeed, the first result (that is easy to verify) and the reflexivity of $%
D_{f,g}$ imply that $2f\left( x\right) +c=f\left( f\left( x\right) +c\right)
$ for all $x\in I$. Hence, by the bijectivity of $f,$ we get $f\left(
x\right) =2x-c$ for all $x\in I$, which implies the remark.

\begin{remark}
If a function $M:I^{2}\rightarrow \mathbb{R}$ is reflexive and (strictly)
increasing in each variable then it is a (strict) mean in $I$.
\end{remark}

Since every mean is reflexive, we first consider conditions for reflexivity
of $D_{f,g}$. We begin with

\begin{lemma}
Let $I$ be a nontrivial interval that is closed with respect to the
addition. Assume that $f,g:I\rightarrow I$ are bijective functions such that
$D_{f,g}$ defined by (2) is reflexive, i.e. that
\begin{equation}
\left( f\circ g\right) ^{-1}\left( f\left( x\right) +g\left( x\right)
\right) =x\text{, \ \ \ \ \ }x\in I.  \tag{3}
\end{equation}%
If $g$ has a fixpoint $x_{0}\in I,$ then $x_{0}=0;$ in particular $0$ must
belong to $I;$

if moreover $g$ is continuous and $I\subset \left[ 0,\infty \right) ,$ then $%
I=\left[ 0,\infty \right) $; $g$ is strictly increasing and either
\begin{equation*}
\text{\ \ \ }0<g\left( x\right) <x\text{, \ \ \ }x\in \left( 0,\infty
\right) \text{, }
\end{equation*}%
\ \ \ \ \ or%
\begin{equation*}
g\left( x\right) >x,\ \ \ \ \ \ x\in \left( 0,\infty \right) .
\end{equation*}%
\
\end{lemma}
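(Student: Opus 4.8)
The plan is to begin by rewriting the reflexivity condition~(3) in a form that separates $f$ from the composition. Because $f\circ g$ is a bijection, applying it to both sides of~(3) gives the equivalent identity
\[
f\bigl(g(x)\bigr)=f(x)+g(x),\qquad x\in I.
\]
The first assertion is then immediate: if $g(x_{0})=x_{0}$, substituting $x=x_{0}$ yields $f(x_{0})=f(x_{0})+g(x_{0})=f(x_{0})+x_{0}$, so $x_{0}=0$, and hence $0\in I$.

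For the ``moreover'' part I would first determine the shape of $I$. A nontrivial interval $I\subset[0,\infty)$ closed under addition must be unbounded above, since otherwise doubling an element near its finite supremum would leave $I$; thus $I$ is $[c,\infty)$ or $(c,\infty)$ for some $c\ge 0$. Next, being a continuous bijection of the interval $I$ onto itself, $g$ is strictly monotone, and I claim it cannot be decreasing. A strictly decreasing self-bijection of $[c,\infty)$ is impossible, as it would have to send the least point $c$ to a greatest element of $I$, which does not exist; on $(c,\infty)$ a strictly decreasing self-bijection forces $g(x)-x\to+\infty$ as $x\downarrow c$ and $g(x)-x\to-\infty$ as $x\to\infty$, so by the intermediate value theorem $g$ would possess a fixpoint $x_{0}\in(c,\infty)$, i.e. $x_{0}>0$, contradicting the first part. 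Hence $g$ is strictly increasing.

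Finally I would pin down $I$ and extract the dichotomy. For an increasing self-bijection the infimum of $I$ is a fixed boundary point; invoking the first part, $0\in I$, which together with $I\subset[0,\infty)$ nontrivial and closed under addition forces $I=[0,\infty)$ and $g(0)=0$. On $(0,\infty)$ the continuous map $x\mapsto g(x)-x$ has no zero (its only admissible zero being $0$), so by the intermediate value theorem it keeps a constant sign: either $g(x)>x$ for all $x>0$, or $g(x)<x$ for all $x>0$; in the latter case monotonicity together with $g(0)=0$ gives $g(x)>0$, that is $0<g(x)<x$. I expect the delicate point to be the exact identification of $I$ as $[0,\infty)$ rather than $(0,\infty)$ or $[c,\infty)$ with $c>0$, which is precisely where the boundary fixpoint at $0$ (supplied by the first part) must be used; the strict monotonicity and the sign dichotomy are then routine consequences of continuity and the intermediate value theorem.
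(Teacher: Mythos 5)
Your proof is correct and follows essentially the same route as the paper's: rewrite (3) as $f(g(x))=f(x)+g(x)$, deduce that any fixpoint of $g$ is $0$ so that $0\in I$, combine this with nontriviality and closure under addition to get $I=[0,\infty)$, and obtain strict monotonicity and the sign dichotomy from the absence of fixpoints in $(0,\infty)$ via the intermediate value theorem. You merely spell out details (the enumeration of possible shapes of $I$, the exclusion of a decreasing $g$) that the paper's terser proof leaves implicit.
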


\begin{proof}
From (3) we have
\begin{equation*}
f\left( g\left( x\right) \right) =f\left( x\right) +g\left( x\right) \text{,
\ \ \ }x\in I.
\end{equation*}%
Thus, if $g\left( x_{0}\right) =x_{0}$ for some $x_{0}\in I$, then $f\left(
x_{0}\right) =f\left( x_{0}\right) +x_{0}$, so $x_{0}=0.$ Hence, if $%
I\subset \left[ 0,\infty \right) $ then, as $I$ is nontrivial and closed
with respect to addition, it must be of the form $\left[ 0,\infty \right) $.
Since $g$ has no fixpoints in $\left( 0,\infty \right) $, the continuity of $%
g$ implies it must be increasing and either \ $0<g\left( x\right) <x$ \ for
all $x\in I$, or $g\left( x\right) >x$ for all $x\in I$.
\end{proof}

This lemma justifies the assumption that $I=\left( 0,\infty \right) $ in our
considerations of reflexivity of $D_{f,g}$.

\begin{proposition}
Let $g:\left( 0,\infty \right) \rightarrow \left( 0,\infty \right) $ be
injective continuous and such that%
\begin{equation}
0<g\left( x\right) <x\text{, \ \ \ \ \ \ }x>0.  \tag{4}
\end{equation}%
Then there is no continuous function$\ f:\left( 0,\infty \right)
\rightarrow \left( 0,\infty \right) $ satisfying equation
\begin{equation}
f\left( g\left( x\right) \right) =f\left( x\right) +g\left( x\right) \text{,
\ \ \ \ \ }x\in \left( 0,\infty \right) ;  \tag{5}
\end{equation}%
in particular, there is no injective continuous function $f:\left(
0,\infty \right) \rightarrow \left( 0,\infty \right) $ such that $D_{f,g}$
is reflexive in $(0,\infty )$.
\end{proposition}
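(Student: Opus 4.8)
The plan is to argue by contradiction: assume a continuous $f:\left( 0,\infty \right) \rightarrow \left( 0,\infty \right) $ satisfies (5) and then iterate the equation \emph{backwards} along the orbit of $g$ until the value of $f$ is forced below $0$. First I would record the structural facts about $g$. Continuity together with injectivity forces strict monotonicity, and $0<g\left( x\right) <x$ rules out the decreasing case exactly as in the Lemma (a decreasing $g$ would tend to $\sup g>0$ as $x\rightarrow 0^{+}$, contradicting $g\left( x\right) <x\rightarrow 0$), so $g$ is strictly increasing. Since throughout this section $g$ is a bijection of $\left( 0,\infty \right) $, the inverse $g^{-1}$ is defined on all of $\left( 0,\infty \right) $. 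Reading $g\left( x\right) <x$ at $x=g^{-1}\left( y\right) $ gives
\begin{equation*}
g^{-1}\left( y\right) >y,\qquad y>0,
\end{equation*}
whence, by an immediate induction, the backward iterates satisfy $g^{-k}\left( x\right) \geq x>0$ for every $k\geq 0$ and every $x>0$.

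Next I would iterate (5) backwards. Rewriting (5) as $f\left( y\right) =f\left( g\left( y\right) \right) -g\left( y\right) $ and substituting $y=g^{-1}\left( x\right) $ yields $f\left( g^{-1}\left( x\right) \right) =f\left( x\right) -x$; iterating $n$ times gives
\begin{equation*}
f\left( g^{-n}\left( x\right) \right) =f\left( x\right) -\sum_{k=0}^{n-1}g^{-k}\left( x\right) ,\qquad n\geq 1,\ x>0.
\end{equation*}
Because every summand obeys $g^{-k}\left( x\right) \geq x$, the partial sums satisfy $\sum_{k=0}^{n-1}g^{-k}\left( x\right) \geq nx\rightarrow \infty $, so the right-hand side tends to $-\infty $. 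This contradicts $f\left( g^{-n}\left( x\right) \right) >0$ and proves the first assertion. The ``in particular'' clause then follows at once: by (3), reflexivity of $D_{f,g}$ is precisely equation (5), so an injective continuous $f$ making $D_{f,g}$ reflexive would be a positive continuous solution of (5), which we have just excluded.

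The step I expect to carry the real weight is the passage to the backward orbit, that is, the use of $g^{-1}$ on all of $\left( 0,\infty \right) $. The contradiction lives ``at $+\infty $'': it is precisely the fact that $g$ is onto (equivalently $\lim_{x\rightarrow \infty }g\left( x\right) =\infty $) that guarantees $g^{-n}\left( x\right) $ is defined for every $n$ and escapes to $+\infty $, so that the accumulated subtractions $\sum_{k}g^{-k}\left( x\right) $ overwhelm the fixed positive number $f\left( x\right) $. By contrast, iterating (5) \emph{forward} only pushes the argument toward $0$ and produces the harmless identity $f\left( g^{n}\left( x\right) \right) =f\left( x\right) +\sum_{k=1}^{n}g^{k}\left( x\right) $, which is perfectly consistent with a positive $f$; hence the whole force of the argument must be extracted from the backward direction, and any weakening of the surjectivity of $g$ is exactly what one must guard against.
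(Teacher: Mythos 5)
Your argument has a genuine gap: it rests on the assumption that $g$ is a bijection of $(0,\infty)$, so that $g^{-1}$ is defined on all of $(0,\infty)$ and the backward orbit $g^{-n}(x)$ exists for every $n$. But Proposition 1 assumes only that $g$ is \emph{injective} and continuous --- in deliberate contrast with Proposition 2, which does assume bijectivity --- and under condition (4) surjectivity genuinely can fail: for $g(x)=\frac{x}{1+x}$ one has $0<g(x)<x$ with $g$ injective and continuous, yet $\mathrm{Ran}(g)=(0,1)$ and $g^{n}\left((0,\infty)\right)=(0,1/n)$, so for every fixed $x>0$ the backward orbit terminates after finitely many steps and $\bigcap_{n}g^{n}\left((0,\infty)\right)=\emptyset$. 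Your own closing remark that ``any weakening of the surjectivity of $g$ is exactly what one must guard against'' names precisely the hypothesis you do not have; the contradiction you extract ``at $+\infty$'' is simply unavailable in this case, and there is no way to patch it, since no point admits an infinite backward orbit.

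The paper avoids this by iterating \emph{forward}: condition (4) and continuity alone give $g^{n}(x)\to 0$, whence $f\left(g^{n}(x)\right)=f(x)+\sum_{k=1}^{n}g^{k}(x)$, and the contradiction comes from letting $n\to\infty$ and comparing with the finite limit $f(0+)$ (whose existence the paper secures by taking $f$ monotone). It is worth saying that in the regime where your argument does apply --- $g$ onto --- it is clean and even dispenses with any monotonicity of $f$, since $f\left(g^{-n}(x)\right)\leq f(x)-nx\to-\infty$ contradicts positivity outright; but as a proof of Proposition 1 under its stated hypotheses it does not go through.
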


\begin{proof}
The continuity of $g$ and condition (4) imply that%
\begin{equation*}
\lim_{n\rightarrow \infty }g^{n}\left( x\right) =0\text{, \ \ \ \ \ }x>0,
\end{equation*}%
where $g^{n}$ denotes the $n$th iterate of $g$.

Assume that there is a continuous and strictly increasing function $f:\left(
0,\infty \right) \rightarrow \left( 0,\infty \right) $ satisfying (5).\ From
(5), by induction we get%
\begin{equation*}
f\left( g^{n}\left( x\right) \right) =f\left( x\right)
+\sum_{k=1}^{n}g^{k}\left( x\right) \text{, \ \ \ \ \ \ \ }x\in \left(
0,\infty \right) \text{, \ }n\in \mathbb{N.}
\end{equation*}%
Since $f$ is nonnegative and increasing, it has a finite right-hand side
limit at $0$, denoted by $f\left( 0+\right) $. Letting here $n\rightarrow
\infty $, we obtain%
\begin{equation*}
f\left( 0+\right) =f\left( x\right) +\sum_{k=1}^{\infty }g^{k}\left(
x\right) \text{, \ \ \ \ \ \ \ }x\in \left( 0,\infty \right) ,
\end{equation*}%
that is a contradiction, as the left side is real constant and right side is
either strictly increasing or $\infty $.
\end{proof}

\begin{proposition}
Let $g:\left( 0,\infty \right) \rightarrow \left( 0,\infty \right) $ be
bijective, continuous and such that%
\begin{equation}
g\left( x\right) >x\text{, \ \ \ \ \ \ }x>0.  \tag{6}
\end{equation}%
Then the following conditions are equivalent:

(i) there is a continuous function $f:\left( 0,\infty \right) \rightarrow
\left( 0,\infty \right) $ such that $D_{f,g}$ is reflexive; \

(ii) there is a continuous function$\ f:\left( 0,\infty \right) \rightarrow \left(
0,\infty \right) $ satisfying (5):
\begin{equation*}
f\left( g\left( x\right) \right) =f\left( x\right) +g\left( x\right) ,\ \ \
\ \ x\in \left( 0,\infty \right) ;
\end{equation*}

(iii) there are a function $\ f:\left( 0,\infty \right) \rightarrow \left(
0,\infty \right) $ and $c\geq 0$ such that%
\begin{equation}
f\left( x\right) =c+\sum_{k=0}^{\infty }g^{-k}\left( x\right) \,,\text{ \ \
\ }x\in \left( 0,\infty \right) ,  \tag{7}
\end{equation}%
where $g^{-k}$ denotes the $k$th iterate of the function $g^{-1}$.
\end{proposition}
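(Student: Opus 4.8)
The plan is to read the equivalence of (i) and (ii) as a direct reformulation of reflexivity, and to bridge (ii) and (iii) through the convergence of the series of iterates $\sum_{k=0}^{\infty}g^{-k}$, with the positivity of $f$ as the engine that forces convergence. First I would record that a continuous bijection $g$ of $(0,\infty)$ with $g(x)>x$ is necessarily strictly increasing (a decreasing bijection would send large $x$ near $0$, contradicting $g(x)>x$), so $g^{-1}$ and each iterate $g^{-k}$ is a strictly increasing continuous self-map of $(0,\infty)$ with $g^{-1}(x)<x$. For (i)$\Leftrightarrow$(ii) I would apply the map $f\circ g$ to the reflexivity identity (3): since $(f\circ g)\big((f\circ g)^{-1}(f(x)+g(x))\big)=f(x)+g(x)$ while $(f\circ g)(x)=f(g(x))$, identity (3) is equivalent to (5). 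Thus (i) and (ii) carry the same information about a continuous $f$, and the substance of the proposition is the passage to (iii).

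For (ii)$\Rightarrow$(iii) I would substitute $x\mapsto g^{-1}(x)$ in (5) to get $f(x)=f(g^{-1}(x))+x$ and iterate, obtaining
\[
f(x)=f\big(g^{-n}(x)\big)+\sum_{k=0}^{n-1}g^{-k}(x),\qquad n\in\mathbb{N}.
\]
The decisive observation is that $f$ is positive: since $f(g^{-n}(x))>0$, every partial sum satisfies $\sum_{k=0}^{n-1}g^{-k}(x)<f(x)$, so the increasing partial sums are bounded and $S(x):=\sum_{k=0}^{\infty}g^{-k}(x)$ converges, with $0<x\le S(x)\le f(x)$. Hence a function of the form (7) exists (e.g. with $c=0$ and $f=S$), which is (iii). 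I would stress that this does \emph{not} claim the given $f$ equals $c+S$: the homogeneous equation $c\circ g=c$ admits nonconstant continuous solutions, so (ii) and (iii) must both be read as existence statements, each equivalent to convergence of the series.

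For (iii)$\Rightarrow$(ii) I would check that $f=c+S$ solves (5), is continuous, and is positive. The functional equation follows from the reindexing $S(g(x))=\sum_{k=0}^{\infty}g^{-k+1}(x)=g(x)+S(x)$; positivity is clear since $c\ge 0$ and the $k=0$ term is $x>0$. For continuity I would fix a compact $[a,b]\subset(0,\infty)$ and use monotonicity of the iterates, $0<g^{-k}(x)\le g^{-k}(b)$ for $x\le b$, so that the convergent numerical series $\sum_{k}g^{-k}(b)$ is a Weierstrass majorant, giving uniform convergence on $[a,b]$ and hence continuity of $S$. The same majorant near $0$, together with $g^{-k}(0+)=0$, yields $S(0+)=0$, so the canonical choice $c=0$ produces a strictly increasing continuous bijection $S$ of $(0,\infty)$ for which $D_{S,g}$ is a genuine reflexive operation, closing the loop to (i).

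I expect the main obstacle to be exactly the convergence step in (ii)$\Rightarrow$(iii): the key is to recognise that it is the positivity of $f$, not the speed at which $g^{-n}(x)\to 0$, that bounds the partial sums and forces the series to converge, and that this is precisely where the sign condition (6) (equivalently $g^{-1}(x)<x$, so that one iterates $g^{-1}$ downward) does its work — in contrast with the preceding proposition under (4), where the analogous computation produced a contradiction rather than a convergent series.
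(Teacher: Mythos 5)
Your proof is correct, and it follows the paper's overall scheme ((i)$\Leftrightarrow$(ii) by applying $f\circ g$ to the reflexivity identity; (ii)$\Rightarrow$(iii) by substituting $x\mapsto g^{-1}(x)$ and iterating; (iii)$\Rightarrow$(i) by the reindexing computation), but the decisive convergence step is argued differently. The paper lets $n\to\infty$ in $f\left(g^{-n}(x)\right)=f(x)-\sum_{k=0}^{n-1}g^{-k}(x)$ after asserting that $c:=f(0+)$ exists, and concludes that the \emph{given} $f$ equals $c+\sum_{k\geq 0}g^{-k}$. You instead extract convergence from the positivity of $f$ (the increasing partial sums are bounded above by $f(x)$) and then exhibit $\sum_{k\geq 0}g^{-k}$ itself as the function required in (iii). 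Your route is more robust: for a merely continuous $f$ the limit $f(0+)$ need not exist, and the given $f$ need not be of the form (7) --- for instance, with $g(x)=2x$ the function $f(x)=2x+1+\sin\left(2\pi\log_2 x\right)$ is a positive continuous solution of (5) that differs from $c+\sum_{k}g^{-k}=c+2x$ by a nonconstant $g$-invariant function --- so your reading of (ii) and (iii) as pure existence statements is the one under which the equivalence is literally true, and your remark about the homogeneous equation $c\circ g=c$ pinpoints exactly where the paper's formulation is too strong. You also supply the Weierstrass-majorant argument for continuity of the sum (using monotonicity of the iterates), which the paper only asserts, and the choice $c=0$ gives a strictly increasing continuous bijection, which is what is actually needed for $D_{f,g}$ in (i) to be well defined.
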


\begin{proof}
The implication $(i)\Longrightarrow (ii)$ is obvious.

Assume (ii)$.$ The assumptions on $g$ imply that $g^{-1},$ the inverse of $%
g, $ is continuous, strictly increasing, and, in view of (6),
\begin{equation*}
0<g^{-1}\left( x\right) <x\text{, \ \ \ \ \ \ }x>0.
\end{equation*}%
Consequently,
\begin{equation*}
\lim_{n\rightarrow \infty }g^{-n}\left( x\right) =0,\text{ \ \ \ \ \ \ }%
x>0\,,
\end{equation*}%
where $g^{-n}$ stands for the $n$th iterate of $g^{-1}\,.$

Assume that there is a continuous function $f$ such that equality (5) holds.
Replacing $x$ by $g^{-1}\left( x\right) $ in (5) we get%
\begin{equation*}
f\left( g^{-1}\left( x\right) \right) =f\left( x\right) -x\text{, \ \ \ \ \ }%
x\in \left( 0,\infty \right) ,
\end{equation*}%
whence, by induction,
\begin{equation*}
f\left( g^{-n}\left( x\right) \right) =f\left( x\right)
-\sum_{k=0}^{n-1}g^{-k}\left( x\right) \text{, \ \ \ \ \ \ \ }x\in \left(
0,\infty \right) ,\text{ }n\in \mathbb{N}\text{.}
\end{equation*}%
Since $c:=f\left( 0+\right) \geq 0$ exists, letting here $n\rightarrow
\infty ,$ we obtain
\begin{equation*}
c=f\left( x\right) -\sum_{k=0}^{\infty }g^{-k}\left( x\right) \,,\text{ \ \
\ }x\in \left( 0,\infty \right) ,
\end{equation*}%
whence%
\begin{equation*}
f\left( x\right) =c+\sum_{k=0}^{\infty }g^{-k}\left( x\right) \,,\text{ \ \
\ }x\in \left( 0,\infty \right) ,
\end{equation*}%
where the series $\sum_{k=1}^{\infty }g^{-k}$ converges, and its sum is a
continuous function, which proves that (iii) holds.

Assume (iii). If $f:\left( 0,\infty \right) \rightarrow \left( 0,\infty
\right) ~$is of the form (7) then, for every constant $c\geq 0$ and $x\in
\left( 0,\infty \right) $\ we have%
\begin{eqnarray*}
f\left( g\left( x\right) \right) &=&c+\sum_{k=0}^{\infty }g^{-k}\left(
g\left( x\right) \right) =c+g\left( x\right) +\sum_{k=1}^{\infty
}g^{-k}\left( g\left( x\right) \right) \\
&=&\left( c+\sum_{k=1}^{\infty }g^{-k}\left( g^{1}\left( x\right) \right)
\right) +g\left( x\right) =\left( c+\sum_{k=0}^{\infty }g^{-k}\left(
x\right) \right) +g\left( x\right) \\
&=&f\left( x\right) +g\left( x\right) .
\end{eqnarray*}%
which implies that $D_{f,g}$ is reflexive, so (i) holds.
\end{proof}

\bigskip Now we prove the main result of this section.

\begin{theorem}
Let $f,g:\left( 0,\infty \right) \rightarrow \left( 0,\infty \right) $ be
continuous, strictly increasing, and onto. The following conditions are
equivalent:

(i) the function $D_{f,g}$ is a bivariable strict mean in $\left(
0,\infty \right) ;$

(ii) the function $D_{f,g}$ is reflexive;

(iii) the series $\sum_{k=0}^{\infty }g^{-k}$ of iterates of $g^{-1}\,\ $%
converges uniformly on compact subsets of $\left( 0,\infty \right) $ and,
\begin{equation}
f=\sum_{k=0}^{\infty }g^{-k};  \tag{8}
\end{equation}%
moreover $D_{f,g}=\mathcal{D}_{g},$ where $\mathcal{D}_{g}:\left( 0,\infty
\right) ^{2}\rightarrow \left( 0,\infty \right) $ defined by
\begin{equation}
\mathcal{D}_{g}\left( x,y\right) =\left( \sum_{k=0}^{\infty }g^{-k+1}\right)
^{-1}\left( \sum_{k=0}^{\infty }g^{-k}\left( x\right) +g\left( y\right)
\right) \text{, \ \ \ \ }x,y>0,  \tag{9}
\end{equation}%
is a strictly increasing bivariable mean in $\left( 0,\infty \right) $.
\end{theorem}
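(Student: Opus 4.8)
The plan is to prove the cycle of implications $(i)\Rightarrow(ii)\Rightarrow(iii)\Rightarrow(i)$, leaning on Remark 4 and Propositions 1 and 2, all of which are already set up for the interval $(0,\infty)$. The implication $(i)\Rightarrow(ii)$ is immediate, since every mean is reflexive. The standing hypotheses of the theorem (that $f$ and $g$ are continuous, strictly increasing, and \emph{onto}) are stronger than what Propositions 1 and 2 require, and it is precisely this surplus that will let me pin down the shape of $f$ exactly.

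For $(ii)\Rightarrow(iii)$ I would first record that reflexivity of $D_{f,g}$ is nothing but equation (5), $f(g(x))=f(x)+g(x)$. From this relation, a fixpoint $g(x_0)=x_0$ would force $x_0=0\notin(0,\infty)$, so $g$ has no fixpoint; hence $g(x)-x$ is continuous and never zero, and by the intermediate value theorem either $0<g(x)<x$ for all $x>0$ or $g(x)>x$ for all $x>0$. Proposition 1 rules out the first alternative, because we are given a continuous $f$ solving (5); therefore $g(x)>x$ and Proposition 2 applies, yielding $f=c+\sum_{k=0}^{\infty}g^{-k}$ for some $c\geq 0$. The crux of this direction, and the main obstacle, is to show $c=0$: here I would use surjectivity of $f$. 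Letting $x\to 0+$, each iterate $g^{-k}(x)\to 0$, so the partial sums $\sum_{k=0}^{n}g^{-k}$ vanish at $0+$; since they are continuous and increase pointwise to the continuous limit $f-c$, Dini's theorem gives uniform convergence on compact subsets and, in particular, $(f-c)(0+)=0$, whence $f(0+)=c$. But a continuous strictly increasing surjection of $(0,\infty)$ onto itself satisfies $f(0+)=0$, forcing $c=0$ and hence (8). Substituting $f=\sum_{k=0}^{\infty}g^{-k}$ into the definition of $D_{f,g}$ and using $g^{-k}\circ g=g^{-k+1}$ gives $f\circ g=\sum_{k=0}^{\infty}g^{-k+1}$, which is exactly formula (9), so $D_{f,g}=\mathcal{D}_g$.

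For $(iii)\Rightarrow(i)$ I would invoke Proposition 2 with $c=0$ to conclude that $f=\sum_{k=0}^{\infty}g^{-k}$ makes $D_{f,g}$ reflexive, and then establish strict monotonicity in each variable so as to apply Remark 4. Each iterate $g^{-k}$ is strictly increasing, hence so is $f$, and consequently $(f\circ g)^{-1}$ is strictly increasing; since $f(x)+g(y)$ is strictly increasing in $x$ through $f$ and in $y$ through $g$, the composite $D_{f,g}(x,y)=(f\circ g)^{-1}(f(x)+g(y))$ is strictly increasing in each variable. By Remark 4, a reflexive function that is strictly increasing in each variable is a strict mean, which is $(i)$; combined with the identity $D_{f,g}=\mathcal{D}_g$ from the previous step, this also shows $\mathcal{D}_g$ is a strictly increasing bivariable mean.

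The decisive step is the vanishing of the free constant $c$ supplied by Proposition 2: this is where the surjectivity hypothesis on $f$ (not present in Proposition 2) does the work, and the only delicate point is justifying the passage $x\to 0+$ inside the infinite series, which I would handle through the monotone partial sums and Dini's theorem. The remaining ingredients — the uniform-convergence claim and the monotonicity computation feeding Remark 4 — are routine by comparison.
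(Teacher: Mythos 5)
Your proof is correct and follows essentially the same route as the paper's: (i)$\Rightarrow$(ii) is trivial, (ii)$\Rightarrow$(iii) combines Propositions 1 and 2 with the surjectivity of $f$ to force $c=f(0+)=0$ and Dini's theorem for the uniform convergence, and (iii)$\Rightarrow$(i) applies Remark 4 to the reflexive, strictly increasing function $\mathcal{D}_g$. The only cosmetic point is that $(f-c)(0+)=0$ comes directly from Proposition~2's identification $c=f(0+)$ rather than from Dini's theorem (which only controls compact subsets of $(0,\infty)$ and says nothing about the behaviour at $0+$), but this does not affect the argument.
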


\begin{proof}
It is obvious that (i) implies (ii). Assume that (ii) holds. Then, applying
Propositions 1 and 2 and taking into account that $c:=f\left( 0+\right) =0$
(by the bijectivity of $f$) we get the first part of (iii); in particular
the uniform convergence on compact subsets of the series of iterates follows
from (7) and the Dini theorem. The "moreover" result follows immediately
from (8) and the definition of $D_{f,g}$. Thus (ii) implies (iii).

To prove (i) assuming (iii), note that the function $\mathcal{D}_{g}$ defined by (9) is
reflexive and strictly increasing in each variable so, by Remark 4, it is a
strict mean.
\end{proof}

From the above theorem we obtain the following

\begin{corollary}
Let a continuous strictly increasing function $r:\left( 0,\infty \right)
\rightarrow \left( 0,\infty \right) $ be such that%
\begin{equation*}
0<r\left( x\right) <x\text{, \ \ \ \ }x>0,
\end{equation*}%
and the series
\begin{equation*}
\sum_{k=0}^{\infty }r^{k}
\end{equation*}%
converges to a finite continuous function. Then the function $\mathcal{D}%
_{r}:\left( 0,\infty \right) ^{2}\rightarrow \left( 0,\infty \right) $
defined by
\begin{equation*}
\mathcal{D}_{r}\left( x,y\right) :=\left( \sum_{k=0}^{\infty }r^{k-1}\right)
^{-1}\left( \sum_{k=0}^{\infty }r^{k}\left( x\right) +r^{-1}\left( y\right)
\right) \text{, \ \ \ \ }x,y>0
\end{equation*}%
is a bivariable strict mean in $\left( 0,\infty \right) $.
\end{corollary}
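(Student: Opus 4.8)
The plan is to recognize this corollary as nothing more than Theorem 2 read through the substitution $g = r^{-1}$, so that the whole argument reduces to a dictionary between the two sets of hypotheses and the two defining formulas. First I would set $g := r^{-1}$. Since $r$ is continuous, strictly increasing, and (implicitly, because $r^{-1}$ must be defined on all of $(0,\infty)$ for the formula to make sense) onto $(0,\infty)$, the map $g$ is a continuous, strictly increasing bijection of $(0,\infty)$ onto itself, so it meets the standing hypotheses of Theorem 2 and Proposition 2. Moreover, applying the increasing bijection $r^{-1}$ to the inequality $0<r(x)<x$ yields $g(x)=r^{-1}(x)>x$ for all $x>0$; that is, condition (6) holds.

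Next I would translate the series. Because $g^{-1}=r$, the $k$th iterate of $g^{-1}$ is exactly $r^{k}$, so $\sum_{k=0}^{\infty}g^{-k}=\sum_{k=0}^{\infty}r^{k}$, and the hypothesis that the latter converges to a finite continuous function is precisely the convergence assumed in Proposition 2. Since the partial sums $\sum_{k=0}^{n}r^{k}$ are continuous and increase monotonically to a continuous limit on each compact subinterval of $(0,\infty)$, Dini's theorem upgrades this to uniform convergence on compact sets, which is the form required in part (iii) of Theorem 2. Setting $f:=\sum_{k=0}^{\infty}r^{k}=\sum_{k=0}^{\infty}g^{-k}$, condition (iii) of Theorem 2 holds with $c=0$, and therefore $D_{f,g}$ is a strict mean and coincides with $\mathcal{D}_{g}$ of formula (9).

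It then remains to check that $\mathcal{D}_{g}$, written out for $g=r^{-1}$, is exactly the function $\mathcal{D}_{r}$ of the statement. Substituting $g^{-k}=r^{k}$ gives $\sum_{k=0}^{\infty}g^{-k}(x)=\sum_{k=0}^{\infty}r^{k}(x)$ and $g(y)=r^{-1}(y)$ in the argument of the outer inverse, while the outer function becomes $\sum_{k=0}^{\infty}g^{-k+1}=\sum_{k=0}^{\infty}r^{k-1}$. Comparing with (9) shows $\mathcal{D}_{g}=\mathcal{D}_{r}$, so $\mathcal{D}_{r}$ is a strict mean in $(0,\infty)$.

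There is no serious obstacle here: the content is entirely in the correct bookkeeping of iterate indices under the replacement $g=r^{-1}$ (in particular that $g^{-k+1}$ becomes $r^{k-1}$). The single genuine analytic point is the Dini upgrade from continuous pointwise to locally uniform convergence, but this is the same step already invoked in the proof of Theorem 2 and causes no difficulty once monotonicity of the partial sums is noted. The only hypothesis needing a word of care is the surjectivity of $r$, which I would point out is forced by the very appearance of $r^{-1}$ on $(0,\infty)$ in the definition of $\mathcal{D}_{r}$.
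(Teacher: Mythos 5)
Your proposal is correct and follows exactly the route of the paper's own proof, which simply applies Theorem 2 with $g:=r^{-1}$ and $f:=\sum_{k=0}^{\infty}r^{k}$ and observes that $\mathcal{D}_{r}=D_{f,g}$. You merely spell out the bookkeeping (the translation of condition (6), the Dini step, the index shift $g^{-k+1}=r^{k-1}$, and the implicit surjectivity of $r$) that the paper leaves to the reader.
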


\begin{proof}
It is enough to apply the previous result with $g:=r^{-1}$, \ $%
f:=\sum_{k=0}^{\infty }r^{k}$ and observe that $\mathcal{D}_{r}=D_{f,g}$.
\end{proof}

Since the mean $\mathcal{D}_{r}$ is strictly related to the iterates of $r$,
we propose the following

\begin{definition}
If $r:\left( 0,\infty \right) \rightarrow \left( 0,\infty \right) $
satisfies the assumptions of Corollary 1, then the function $\mathcal{D}_{r}$
is called an iterative mean of generator $r$.
\end{definition}

\begin{example}
Applying the definition of $\mathcal{D}_{r}$ for the generator $r:\left(
0,\infty \right) \rightarrow \left( 0,\infty \right) $ given by $r\left(
x\right) =wx$, where $w\in \left( 0,1\right) $ is fixed, we get weighted
arithmetic mean $\mathcal{D}_{r}\left( x,y\right) =$ $wx+\left( 1-w\right) y$
\ for all $x,y\in \left( 0,\infty \right) .$ \ \ \
\end{example}

\begin{example}
Let $p\in \left( 0,1\right] $. The function $r:\left( 0,\infty \right)
\rightarrow \left( 0,\infty \right) $ given by
\begin{equation*}
r\left( x\right) =\frac{px^{2}}{x+1}\text{, \ \ \ \ }x>0,
\end{equation*}%
is strictly increasing, and we have
\begin{equation*}
0<r\left( x\right) <px\text{, \ \ \ \ }x>0.
\end{equation*}%
It follows that in the case when $p<1$, the series $\sum_{k=0}^{\infty
}r^{k} $ converges uniformly on compact sets, and, consequently, the
function $r$ generates the iterative mean $\mathcal{D}_{r}$.\
\end{example}

\begin{remark}
In Theorem 2 we assume that $I=\left( 0,\infty \right) .$ One could also get
the suitable results if $I$ is one the following intervals $\left[ 0,\infty
\right) $, $\left( -\infty ,0\right) ,$ $\left( -\infty ,0\right] ,$ as well
as $\mathbb{R}$.
\end{remark}

\section{Invariant operation with respect to iterative mean-type mapping
need not be a mean}

In this section we focus our attention on the question whether the invariant
function occurring in (1):%
\begin{equation*}
D_{f\circ g,g\circ h}\circ \left( D_{f,g},D_{g,h}\right) =D_{f\circ g,g\circ
h},
\end{equation*}%
is a mean, if the coordinates of the mapping $\left( D_{f,g},D_{g,h}\right) $
defined by (2) are means.

If $D_{f,g}=\mathcal{D}_{g}$ and $D_{g,h}=\mathcal{D}_{h},$ then, in view of
Theorem 1, the function $D_{f\circ g,g\circ h}$\ is invariant with respect
to the mean type mapping $\left( \mathcal{D}_{g},\mathcal{D}_{h}\right) $.

On the other hand $\mathcal{D}_{g}$ (and $\mathcal{D}_{h}$) is a very
special case of generalized weighted quasiarithmetic mean $M_{\varphi ,\psi
}:I^{2}\rightarrow I$,%
\begin{equation*}
M_{\varphi ,\psi }\left( x,y\right) =\left( \varphi +\psi \right)
^{-1}\left( \varphi \left( x\right) +\psi \left( y\right) \right) \text{, \
\ \ \ }x,y\in I\text{,}
\end{equation*}%
where $\varphi ,\psi :I\rightarrow \mathbb{R}$ are continuous, of same type
monotonicity, and such that $\varphi +\psi $ is strictly monotonic in the
interval $I$ (\cite{Matko10}). Indeed, with $\varphi =\sum_{k=0}^{\infty
}g^{-k}$ and $\psi =g$ we have $M_{\varphi ,\psi }=\mathcal{D}_{g}$ (and
with $\psi =\sum_{k=0}^{\infty }h^{-k}$ and $\gamma =h$ we have $M_{\psi
,\gamma }=\mathcal{D}_{h}$). Moreover, it is known that if $M_{\varphi ,\psi
}$ and $M_{\psi ,\gamma }$ are two generalized quasiarithmetic means, then $%
M_{\varphi +\psi ,\psi +\gamma },$ the mean of the same type, is a unique
mean that is invariant with respect to the mean-type mapping $\left(
M_{\varphi ,\psi },M_{\psi ,\gamma }\right) :I^{2}\rightarrow I^{2}$ (\cite%
{Matko13}).

In this context the question arises whether the invariant function $D_{f\circ
g,g\circ h}$ coincides with the invariant mean $M_{\varphi +\psi ,\psi
+\gamma }?$ In view of (\cite{Matko13}), the answer is
positive, if $D_{f\circ g,g\circ h}$ is a mean, i.e. if there is a suitable
bijective function $u:\left( 0,\infty \right) \rightarrow \left( 0,\infty
\right) $ such that $D_{f\circ g,g\circ h}=\mathcal{D}_{u}$.

We prove

\begin{theorem}
Assume that $f,g,h\colon (0,\infty) \to (0,\infty)$ are continuous, strictly increasing, and onto. Then $D_{f,g}$, $D_{g,h}$, and
$D_{f\circ g,g\circ h}$\ are means in $\left( 0,\infty \right) , $ iff
\begin{equation}
g=\sum_{i=0}^{\infty }h^{-i}\text{, \ \ \ \ \ \ \ \ \ }f=\sum_{j=0}^{\infty
}\left( \sum_{i=0}^{\infty }h^{-i}\right) ^{-j}\text{, }  \tag{10}
\end{equation}%
and $h$ satisfies the composite functional equation
\begin{equation}
\text{\ } \sum_{j=0}^{\infty }\left( \sum_{i=0}^{\infty }h^{-i}\right)
^{-j+1} =\sum_{k=0}^{\infty }\left( \sum_{i=0}^{\infty }h^{-i+1}\right)
^{-k},  \tag{11}
\end{equation}%
where $i,j,k$ stand for the indices of iterates of the suitable functions.
\end{theorem}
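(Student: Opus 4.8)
The plan is to reduce everything to three applications of Theorem 2, one for each of the pairs $(g,h)$, $(f,g)$ and $(f\circ g,g\circ h)$. Since $f,g,h$ are continuous, strictly increasing and onto, so are the compositions $f\circ g$ and $g\circ h$; thus Theorem 2 applies verbatim to each of these pairs, and in each case the statement ``$D$ is a mean'' is equivalent to expressing the first generator as the series of iterates of the inverse of the second. The only arithmetic needed beyond this is the bookkeeping identity for iterates, namely $(\sum_{j=0}^{\infty}g^{-j})\circ g=\sum_{j=0}^{\infty}g^{-j+1}$ and $(\sum_{i=0}^{\infty}h^{-i})\circ h=\sum_{i=0}^{\infty}h^{-i+1}$, which follow termwise from $g^{-j}\circ g=g^{-j+1}$.

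For necessity, assume all three operations are means. Applying Theorem 2 to $(g,h)$ gives $g=\sum_{i=0}^{\infty}h^{-i}$, the first identity in (10). Applying it to $(f,g)$ gives $f=\sum_{j=0}^{\infty}g^{-j}$, and substituting the previous expression for $g$ yields $f=\sum_{j=0}^{\infty}(\sum_{i=0}^{\infty}h^{-i})^{-j}$, the second identity in (10). Finally, applying Theorem 2 to $(f\circ g,g\circ h)$ gives $f\circ g=\sum_{k=0}^{\infty}(g\circ h)^{-k}$. Here I compute the two sides in terms of $h$ alone: the bookkeeping identity together with (10) gives $f\circ g=\sum_{j=0}^{\infty}(\sum_{i=0}^{\infty}h^{-i})^{-j+1}$, while $g\circ h=\sum_{i=0}^{\infty}h^{-i+1}$ turns the right-hand side into $\sum_{k=0}^{\infty}(\sum_{i=0}^{\infty}h^{-i+1})^{-k}$. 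Equating the two produces exactly the composite equation (11).

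For sufficiency, assume (10) and (11). From the first identity in (10), read backwards through Theorem 2 (implication $(iii)\Rightarrow(i)$), $D_{g,h}$ is a mean; here the uniform convergence on compact subsets of $\sum_{i}h^{-i}$ required by Theorem 2 is not an extra hypothesis but follows from Dini's theorem, since the partial sums are continuous and increasing and their limit $g$ is continuous by assumption. The second identity in (10), combined with $g=\sum_{i}h^{-i}$, reads $f=\sum_{j}g^{-j}$, so the same argument makes $D_{f,g}$ a mean. For the third operation I reverse the computation of the previous paragraph: (11) is precisely $f\circ g=\sum_{k=0}^{\infty}(g\circ h)^{-k}$, whence Theorem 2 (again via Dini for the relevant series) makes $D_{f\circ g,g\circ h}$ a mean.

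I expect the only real care to lie in two places. First, the iterate-index bookkeeping must be carried out consistently, so that the shifted series $\sum g^{-j+1}$ and $\sum h^{-i+1}$ match the exponents appearing in (11); a convenient sanity check is that these shifted series are just the reflexivity relations $f\circ g=f+g$ and $g\circ h=g+h$ re-expressed, which are available since $D_{f,g}$ and $D_{g,h}$ are reflexive. Second, one must confirm the uniform-convergence clauses needed to invoke Theorem 2 in the sufficiency direction, all of which Dini's theorem supplies. It is worth emphasising that (11) is genuinely independent of (10): Theorem 1 guarantees only that $D_{f\circ g,g\circ h}$ is invariant under $(D_{f,g},D_{g,h})$, not that it is a mean, and Example 3 exhibits (10) holding while (11) fails, so (11) cannot be dropped.
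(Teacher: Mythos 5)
Your proposal is correct and follows essentially the same route as the paper: both reduce each of the three mean conditions to Theorem 2, obtaining $g=\sum_{i}h^{-i}$, $f=\sum_{j}g^{-j}$, $f\circ g=\sum_{k}(g\circ h)^{-k}$, and then rewrite the third identity in terms of $h$ alone via the index-shift $(\sum g^{-j})\circ g=\sum g^{-j+1}$ to obtain (11). Your added remarks (checking that $f\circ g$, $g\circ h$ satisfy the hypotheses of Theorem 2, and that Example 3 shows (11) is not implied by (10)) are accurate but do not change the argument.
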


\begin{proof}
Assume that $D_{f,g}$, $D_{g,h}$ and $D_{f\circ g,g\circ h}$\ are means in $%
\left( 0,\infty \right) .$ In view of Theorem 2, we have $D_{g,h}=\mathcal{D}%
_{h}$, $D_{f,g}=\mathcal{D}_{g},$ $D_{f\circ g,g\circ h}=\mathcal{D}_{g\circ
h}$, and
\begin{equation}
g=\sum_{i=0}^{\infty }h^{-i}\text{, \ \ \ \ \ \ }f=\sum_{j=0}^{\infty }g^{-j}%
\text{, \ \ \ \ \ \ \ }f\circ g=\sum_{k=0}^{\infty }\left( g\circ h\right)
^{-k}.  \tag{12}
\end{equation}%
It follows that (10) holds true, and the third of equalities (12) implies
that $h$ satisfies the composite functional equation,
\begin{equation*}
\text{\ }\left( \sum_{j=0}^{\infty }\left( \sum_{i=0}^{\infty }h^{-i}\right)
^{-j}\right) \circ \left( \sum_{i=0}^{\infty }h^{-i}\right)
=\sum_{k=0}^{\infty }\left( \left( \sum_{i=0}^{\infty }h^{-i}\right) \circ
h\right) ^{-k},
\end{equation*}%
which simplifies to (11). The converse implication is obvious.
\end{proof}

Thus our question leads to rather complicate composite functional equation
(11). We pose the following

\begin{problem}
Determine strictly increasing bijective functions $h:\left( 0,\infty \right)
\rightarrow \left( 0,\infty \right) $ satisfying equation (11).
\end{problem}

Let us consider the following

\begin{example}
Take $w\in \left( 0,1\right) ,$ and define $h:\left( 0,\infty \right)
\rightarrow \left( 0,\infty \right) $ by $h\left( x\right) =\frac{x}{w}$.
From Theorem 2 with $g$ and $f$ replaced, respectively, by $h$ and $g,$ we
get%
\begin{equation*}
g\left( x\right) =\sum_{k=0}^{\infty }h^{-k}\left( x\right)
=\sum_{k=0}^{\infty }w^{k}x=\frac{x}{1-w}\text{, \ \ \ \ \ }x\in \left(
0,\infty \right) ,
\end{equation*}%
and, for all $x,y>0$,
\begin{eqnarray*}
\mathcal{D}_{h}\left( x,y\right) &=& \left( \sum_{k=0}^{\infty
}h^{-k+1}\right) ^{-1}\left( \sum_{k=0}^{\infty }h^{-k}\left( x\right)
+h\left( y\right) \right) \\
&=& w\left( 1-w\right) \left( \frac{x}{1-w}+\frac{y}{w}\right) =wx+\left(
1-w\right) y.
\end{eqnarray*}%
Similarly, as $g^{-1}\left( x\right) =\left( 1-w\right) x,$ we get
\begin{equation*}
f\left( x\right) =\sum_{k=0}^{\infty }g^{-k}\left( x\right)
=\sum_{k=0}^{\infty }\left( 1-w\right) ^{k}x=\frac{x}{w}\text{, \ \ \ \ \ }%
x\in \left( 0,\infty \right) ,
\end{equation*}%
and, for all $x,y>0,$%
\begin{eqnarray*}
\mathcal{D}_{g}\left( x,y\right) &=& \left( \sum_{k=0}^{\infty
}g^{-k+1}\right) ^{-1}\left( \sum_{k=0}^{\infty }g^{-k}\left( x\right)
+g\left( y\right) \right) \\
&=& \left( 1-w\right) w\left( \frac{1}{w}x+\frac{y}{1-w}\right) =\left(
1-w\right) x+wy.
\end{eqnarray*}%
Moreover, since $\left( f\circ g\right) \left( x\right) =\frac{x}{w\left(
1-w\right) }$, \ $\left( g\circ h\right) \left( x\right) =\frac{x}{\left(
1-w\right) w}$, \ we get, for all $x,y>0,$%
\begin{eqnarray*}
D_{f\circ g,g\circ h}\left( x,y\right) &=&\left( \left( f\circ g\right)
\circ \left( g,h\right) \right) ^{-1}\left( \left( f\circ g\right) \left(
x\right) +\left( g\circ h\right) \left( y\right) \right) \\
&=& \left( w\left( 1-w\right) \right) ^{2}\left( \frac{x}{w\left( 1-w\right)
}+\frac{y}{\left( 1-w\right) w}\right) \\
&=&w\left( 1-w\right) \left( x+y\right) ,
\end{eqnarray*}%
whence
\begin{equation*}
D_{f\circ g,g\circ h}=2w\left( 1-w\right) A,
\end{equation*}%
where $A\left( x,y\right) =\frac{x+y}{2}$ ($x,y>0$) is the arithmetic mean.

For every $w\in \left( 0,1\right) $ we have $A\circ \left( \mathcal{D}_{g},%
\mathcal{D}_{h}\right) =A\,,$ and $A$ is a unique $\left( \mathcal{D}_{g},%
\mathcal{D}_{h}\right) $-invariant mean. The function $D_{f\circ g,g\circ h}$
is also invariant for every $w\in \left( 0,1\right) $, but $D_{f\circ
g,g\circ h}$ is not a mean for any $w\in \left( 0,1\right) $.
\end{example}

In the context of the above discussion let us consider the following

\begin{remark}
Under conditions of Theorem 3, the bijective functions $f,g,h:\left(
0,\infty \right) \rightarrow \left( 0,\infty \right) $ are increasing.
Consequently, they are almost everywhere differentiable and $f\left(
0+\right) =g\left( 0+\right) =h\left( 0+\right) =0$. Assume additionally
that $f\left( 0\right) =g\left( 0\right) =h\left( 0\right) =0$ and that $%
f,g,h$ are differentiable at the point $0$. If the functions $D_{f,g}$, $%
D_{g,h}$ and $D_{f\circ g,g\circ h}$\ were means in $\left[ 0,\infty \right)
$ then, by their reflexivity, we would have%
\begin{equation*}
f\left( g\left( x\right) \right) =f\left( x\right) +g\left( x\right) \text{,
\ }g\left( h\left( x\right) \right) =g\left( x\right) +h\left( x\right)
\text{, \ }f\left( g^{2}\left( h\left( x\right) \right) \right) =f\left(
g\left( x\right) \right) +g\left( h\left( x\right) \right) \text{, \ }
\end{equation*}%
for all $x\geq 0$, where $g^{2}$ is the second iterate of $g$.
Differentiating both sides of each of these equalities at $x=0$ and then setting%
\begin{equation*}
a:=f^{\prime }\left( 0\right) ,\text{ \ \ \ \ }b:=g^{\prime }\left( 0\right)
\text{, \ \ \ \ \ }c:=h^{\prime }\left( 0\right) ,
\end{equation*}%
we would get%
\begin{equation*}
ab=a+b\text{, \ \ \ \ }bc=b+c\text{, \ \ \ \ }ab^{2}c=ab+bc.
\end{equation*}%
Since this system has no solution satisfying the condition $a\geq 1,b\geq
1,c\geq 1$, the functions $f,g,h$ do not exist.
\end{remark}

%{\LARGE This remark may incline us toward the conjecture that equation (13)
%has no an admissible solution}.

\section{\protect\bigskip Final remarks}

\begin{remark}
(An extension of Theorem 1) Let $(Y,\oplus )$ be a medial groupoid, $%
X\subseteq Y$ be a subset, and $\diamond \colon (Y^{X})^{2}\rightarrow Y^{X}$
be an operation (with the convenient notation $f\diamond g$ instead of $%
\diamond (f,g),$ for $f,g\in Y^{X}$).

Let $f,g\in Y^{X}$ such that $f\diamond g$ is invertible and $Ran(f\diamond
g)=\{f(x)\oplus g(y)\mid x,y\in X\}$. The operation $A_{f,g}\colon
X^{2}\rightarrow X$  given by
\begin{equation*}
A_{f,g}(x,y)=(f\diamond g)^{-1}(f(x)\oplus g(y)),\quad x,y\in X,
\end{equation*}%
is well-defined.

Similar argument to the one of Theorem 1 shows that: \emph{for all }$%
f,g,h\in Y^{X}$\emph{\ such that the pairs }$\left( f,g\right) ,\left(
g,h\right), $ $\left( f\diamond g,g\diamond h\right) $\emph{\
satisfy the above conditions, and $A_{f,g}$, $A_{g,h}$ are reflexive, we have}\textit{\ }%
\begin{equation*}
A_{f\diamond g,g\diamond h}\circ \left( A_{f,g},A_{g,h}\right) =A_{f\diamond
g,g\diamond h}.
\end{equation*}
\end{remark}

%\begin{proof}
%Let $f,g,h \in \mathcal{F}(X)$ and let $x,y \in X$, then using the definitions of $A^{[f\diamond g,g\diamond h]}$, $A_{f,g}$, $A^{[g,h]}$ and the bisymmetry of $\ast$ we get
%\begin{eqnarray}
%&& \left(A^{[f\diamond g,g\diamond h]} \circ \left(A_{f,g},A^{[g,h]}\right)\right)(x,y) \nonumber \\
%&=& A^{[f\diamond g,g\diamond h]}\left(A_{f,g}(x,y),A^{[g,h]}(x,y)\right) \nonumber \\
%&=& A^{[f\diamond g,g\diamond h]}\left((f \diamond g)^{-1}(f(x) \ast g(y)),(g \diamond h)^{-1}(g(x) \ast h(y))\right) \nonumber \\
%&=& \left((f \diamond g) \diamond (g \diamond h)\right)^{-1}\left((f \diamond g)\left((f \diamond g)^{-1}(f(x) \ast g(y))\right) \ast (g \diamond h)\left((g \diamond h)^{-1}(g(x) \ast h(y))\right)\right) \nonumber \\
%&=& \left((f \diamond g) \diamond (g \diamond h)\right)^{-1}((f(x) \ast g(y)) \ast (g(x) \ast h(y))) \nonumber \\
%&=& \left((f \diamond g) \diamond (g \diamond h)\right)^{-1}((f(x) \ast g(x)) \ast (g(y) \ast h(y))) \nonumber \\
%&=& A^{[f\diamond g,g\diamond h]}(x,y), \nonumber
%\end{eqnarray}
%which completes the proof.
%\end{proof}

%One could add here the following:
%
%Making use of the notion medial magma this result can be generalized as
%follows:
%
%\begin{proposition}
%Let...
%\end{proposition}
%
%\bigskip

\begin{remark}
Replacing addition by multiplication in the definition of $D_{f,g}$, we can
define another operation
\begin{equation*}
C_{f,g}\left( x,y\right) =\left( f\circ g\right) ^{-1}\left( f\left(
x\right) g\left( y\right) \right) \text{, \ \ \ \ }x,y\in I,
\end{equation*}%
and consider analogous questions.  $\,$
\end{remark}

\section*{Acknowledgements}

The authors thank the reviewer for his/her quick and careful review and useful suggestions. This research is partly supported by the
internal research project R-AGR-0500 of the University of Luxembourg
and by the Luxembourg National Research Fund R-AGR-3080.
%\bigskip
%
%One could either mention it here or try to write another paper.

\bigskip

\end{document}